\documentclass[11pt,a4paper]{article}

\usepackage{amsmath,amssymb,amsthm,mathtools}
\usepackage{booktabs}
\usepackage{geometry}
\usepackage{xcolor}
\usepackage{listings}
\usepackage[unicode,bookmarks=false]{hyperref}

\newtheorem{theorem}{Theorem}[section]
\newtheorem{proposition}[theorem]{Proposition}
\newtheorem{lemma}[theorem]{Lemma}

\theoremstyle{plain}

\theoremstyle{definition}
\newtheorem{definition}[theorem]{Definition}

\newtheorem{problem}[theorem]{Problem}

\newcommand{\F}{\mathbb F}
\newcommand{\AG}{\mathbb{AG}}
\newcommand{\Ograph}{\mathcal O}
\newcommand{\Ostar}{\mathcal O^*}
\newcommand{\Span}{\operatorname{span}}
\newcommand{\gauss}[2]{\genfrac{[}{]}{0pt}{}{#1}{#2}_2}

\definecolor{codegreen}{rgb}{0,0.45,0}
\definecolor{codegray}{rgb}{0.45,0.45,0.45}
\definecolor{codepurple}{rgb}{0.45,0,0.55}
\definecolor{backcolour}{rgb}{0.97,0.97,0.97}

\lstdefinestyle{pythonstyle}{
    language=Python,
    backgroundcolor=\color{backcolour},
    commentstyle=\color{codegreen},
    keywordstyle=\color{blue},
    numberstyle=\tiny\color{codegray},
    stringstyle=\color{codepurple},
    basicstyle=\ttfamily\footnotesize,
    breakatwhitespace=false,
    breaklines=true,
    captionpos=b,
    keepspaces=true,
    numbers=left,
    numbersep=8pt,
    showspaces=false,
    showstringspaces=false,
    showtabs=false,
    tabsize=4
}

\title{\textbf{Annihilating-Ideal Graphs and Orthogonality Graphs over $\mathbb{F}_2$}}
\author{}

\author{R. Nikandish\\
	Department of Mathematics\\
	Jundi-Shapur University of Technology\\
	Dezful, Iran\\
	\texttt{r.nikandish@ipm.ir}}
\date{}

\begin{document}

\maketitle

\begin{abstract}
We construct a family of finite local rings whose annihilating-ideal
graphs are naturally described by orthogonality of subspaces of
$\mathbb{F}_2^n$.  For $n=4$ we determine the clique and chromatic
numbers exactly and obtain
\[
\omega(\AG(R_4))=5<6=\chi(\AG(R_4)).
\]
Thus $\AG(R_4)$ is not weakly perfect, and the Behboodi--Rakeei
conjecture fails for non-reduced commutative rings.
\end{abstract}

{{\it Key Words}: Annihilating-ideal graph, Weakly perfect graph,
	Chromatic number, Clique number, Orthogonality graph,
	Non-reduced commutative ring.\newline
	{\indent{~~2020 {\it Mathematics Subject Classification}: 05C15, 05C25, 13A15, 13H10.}}}

\section{Introduction}

Let $R$ be a commutative ring with identity. The \emph{annihilating-ideal graph}
of $R$, denoted by $\AG(R)$, is the graph whose vertices are the nonzero ideals
of $R$ having nonzero annihilator, with two distinct vertices $I$ and $J$
adjacent if and only if
\[
IJ=0.
\]
This graph was introduced by Behboodi and Rakeei
\cite{BehboodiRakeei1,BehboodiRakeei2} as an ideal-theoretic analogue of the
zero-divisor graph of Beck \cite{Beck}, whose vertices are the elements of $R$
rather than its ideals.

A graph $G$ is called \emph{weakly perfect} if
\[
\chi(G)=\omega(G),
\]
where $\chi(G)$ and $\omega(G)$ denote the chromatic and clique numbers of
$G$, respectively. Beck \cite{Beck} conjectured that the zero-divisor graph of
every commutative ring is weakly perfect; this was disproved by Anderson and
Naseer \cite{AndersonNaseer}. Behboodi and Rakeei observed that translating
the same ring-theoretic data into the language of ideals rather than elements
changes the picture: in particular, the Anderson--Naseer ring, which fails
weak perfection at the level of elements, is weakly perfect at the level of
ideals (see \cite[Proposition~2.1]{BehboodiRakeei2}). Motivated by this,
Behboodi and Rakeei proposed the following ideal-theoretic counterpart of
Beck's conjecture.

\begin{quote}
	\textbf{Conjecture 1.1 (Behboodi--Rakeei \cite{BehboodiRakeei2}).}
	\emph{For every commutative ring $R$ with unity,}
	\[
	\chi(\AG(R))=\omega(\AG(R)).
	\]
\end{quote}

Conjecture~1.1 is known to hold whenever $R$ is reduced
\cite[Corollary~2.11]{BehboodiRakeei2}; see also
\cite{Aalipour} for further results on colourings of $\AG(R)$ in the reduced
case, and \cite{Nikandish} for the case $R=\mathbb Z_n$. For non-reduced rings,
however, the conjecture has remained open.

An analogous conjecture was formulated for commutative semigroups by DeMeyer
and Schneider \cite{DeMeyerSchneider}, who proved that $\AG(S)$ is weakly
perfect whenever $S$ is a reduced semigroup and asked whether this persists in
general.

\begin{quote}
	\textbf{Conjecture 1.2 (DeMeyer--Schneider \cite{DeMeyerSchneider}).}
	\emph{For every commutative semigroup $S$,}
	\[
	\chi(\AG(S))=\omega(\AG(S)).
	\]
\end{quote}

Kadu, Joshi and Gonde \cite{Kadu} settled Conjecture~1.2 negatively, exhibiting
a finite non-reduced commutative semigroup $S$ with
$\chi(\AG(S))=4>3=\omega(\AG(S))$. As they note explicitly in the introduction
to \cite{Kadu}, this resolves the semigroup analogue of the conjecture, while
the original ring-theoretic Conjecture~1.1 of Behboodi and Rakeei was, at the
time, known to hold only for reduced rings. The status of Conjecture~1.1 for
non-reduced commutative rings has thus remained open.

In this paper we settle Conjecture~1.1 in the negative. We introduce a family
of finite local rings
\[
R_n=\frac{\F_2[x_1,\ldots,x_n,z]}{(z^2,\,zx_i,\,x_ix_j\ (i\ne j),\,x_i^2-z)},
\]
whose annihilating-ideal graphs admit a transparent description in terms of
orthogonality of subspaces of $\F_2^n$ with respect to the standard bilinear
form. For $n=4$ this description yields a ring $R_4$ of order $64$ with
\[
\omega(\AG(R_4))=5<6=\chi(\AG(R_4)),
\]
so that $\AG(R_4)$ is not weakly perfect. This shows that Conjecture~1.1 fails
for non-reduced commutative rings, complementing the semigroup counterexample
of \cite{Kadu} and showing that the failure of weak perfection for
annihilating-ideal graphs is not a phenomenon special to the semigroup
setting. We also compute $\omega$ and $\chi$ of the associated orthogonality
graphs for small $n$, exhibit a further counterexample at $n=5$, and record
some open problems on the general behaviour of these invariants.

\section{The ring and the orthogonality graph}

Let
\[
V_n=\mathbb{F}_2^n
\]
and equip $V_n$ with the standard bilinear form
\[
B_n(u,v)=u\cdot v.
\]

Consider
\[
R_n=
\frac{\mathbb{F}_2[x_1,\ldots,x_n,z]}
{(z^2,zx_i,x_ix_j\ (i\ne j),x_i^2-z)}.
\]

The defining relations give
\[
x_i^2=z,\qquad
x_ix_j=0\quad(i\ne j),\qquad
zx_i=0,\qquad z^2=0.
\]
Consequently every element of $R_n$ has a unique expression
\[
a+\sum_{i=1}^n b_i x_i+cz,
\qquad
a,b_i,c\in\mathbb{F}_2.
\]
Hence
\[
|R_n|=2^{n+2}.
\]

Let
\[
\mathfrak m=(x_1,\ldots,x_n,z).
\]
Then
\[
\mathfrak m^2=(z),
\qquad
\mathfrak m^3=0.
\]

If
\[
u=\sum_{i=1}^n a_i x_i,
\qquad
v=\sum_{i=1}^n b_i x_i,
\]
then
\[
uv
=
\sum_{i=1}^n a_i b_i x_i^2
=
\left(\sum_{i=1}^n a_i b_i\right)z.
\]
Thus
\begin{equation}
uv=B_n(u,v)z.
\label{eq:multiplication}
\end{equation}

\begin{proposition}
Every nonzero proper ideal of $R_n$ contains $z$.
\end{proposition}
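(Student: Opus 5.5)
The plan is to take an arbitrary nonzero proper ideal $I$ of $R_n$, pick a nonzero element $r\in I$, and produce $z$ from $r$ by multiplying by suitable elements of $R_n$. Since $R_n$ is local with maximal ideal $\mathfrak m$, I will first show that $I\subseteq\mathfrak m$. By the normal form, every element has the shape $r=a+\sum_i b_ix_i+cz$. If $a=1$, then $r=1+m$ with $m\in\mathfrak m$. Because $\mathfrak m^3=0$, the element $m$ is nilpotent, so $r$ is a unit; explicitly, $(1+m)(1+m+m^2)=1+m^3=1$ in characteristic $2$. A unit in $I$ would force $I=R_n$, contradicting properness. Hence every element of $I$ has $a=0$.

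Next I split into cases according to the linear part of $r=\sum_i b_ix_i+cz\neq 0$.

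\emph{Case 1: some $b_j=1$.} Multiply $r$ by $x_j$. The relations $x_ix_j=0$ for $i\ne j$, $zx_j=0$ and $x_j^2=z$ give
\[
x_j r = b_j x_j^2 = z.
\]
(This is also the special case $u=\sum_i b_ix_i$, $v=x_j$ of \eqref{eq:multiplication}, with $B_n(u,e_j)=b_j$.) Therefore $z\in I$.

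\emph{Case 2: all $b_i=0$.} Then $r=cz$ with $c=1$, since $r\ne 0$. So $r=z\in I$ directly.

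There is no serious obstacle here. The only point requiring care is the reduction to $a=0$, which relies on the nilpotence of $\mathfrak m$; that is immediate from $\mathfrak m^3=0$. Uniqueness of the normal form is needed only to read off the coefficients $a,b_i,c$ unambiguously, and the paper has already established it.
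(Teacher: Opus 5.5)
Your proof is correct and follows the paper's argument. The paper splits on whether $I\subseteq(z)$; otherwise it multiplies an element $u+cz$ with $u\neq 0$ by some $v$ with $B_n(u,v)=1$, obtained by nondegeneracy, which is your Case~1 with the explicit choice $v=e_j$. Your computation $(1+m)(1+m+m^2)=1+m^3=1$ also justifies the zero-constant-term step, which the paper only asserts.
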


\begin{proof}
Let $I$ be a nonzero proper ideal of $R_n$.

If $I\subseteq (z)$, then $I=(z)$, since $(z)$ is a
one-dimensional $\mathbb{F}_2$-space and $I\ne 0$.

Otherwise, $I$ contains an element $u+cz$ with $u\ne 0$.
Since $I$ is proper, its elements have zero constant term.
By nondegeneracy of $B_n$, there is $v\in V_n$ such that
$B_n(u,v)=1$. Hence
$(u+cz)v=uv=B_n(u,v)z=z.$
Thus $z\in I$.
\end{proof}

For a subspace $U\leq V_n$, define
\[
I_U=(z)+U.
\]

\begin{theorem}
The map
\[
U\longmapsto I_U=(z)+U
\]
is a bijection between the subspaces of $V_n$ and the nonzero
proper ideals of $R_n$.
\end{theorem}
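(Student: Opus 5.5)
We identify $V_n$ with the $\mathbb F_2$-span of $x_1,\ldots,x_n$ inside $\mathfrak m$, so that $\mathfrak m = V_n\oplus \mathbb F_2 z$ as $\mathbb F_2$-vector spaces. Here $I_U=(z)+U$ is read as the $\mathbb F_2$-subspace $U\oplus\mathbb F_2 z$. The plan is to show three things: each $I_U$ is a nonzero proper ideal, every nonzero proper ideal has this form, and the assignment $U\mapsto I_U$ is injective.

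\emph{Step 1: $I_U$ is an ideal.} Since $R_n$ is local with maximal ideal $\mathfrak m$, every element has the form $a+m$ with $a\in\mathbb F_2$ and $m\in\mathfrak m$. Hence it suffices to check that $I_U$ is closed under addition, which holds because it is a subspace, and under multiplication by elements of $\mathfrak m$. For $u\in U$ and $v\in V_n$, \eqref{eq:multiplication} gives $uv=B_n(u,v)z\in(z)$. Moreover $zu=0$ and $z^2=0$, so $\mathfrak m\cdot I_U\subseteq (z)\subseteq I_U$. The ideal $I_U$ contains $z\neq 0$ and lies in $\mathfrak m$, so it is nonzero and proper.

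\emph{Step 2: surjectivity.} Let $I$ be a nonzero proper ideal. It lies in $\mathfrak m$, since the ring is local and a proper ideal contains no unit. By the preceding Proposition, $z\in I$. Set $U=\{u\in V_n : u+cz\in I \text{ for some } c\}$, the projection of $I$ onto $V_n$ along $\mathbb F_2 z$. This is a subspace because $I$ is an $\mathbb F_2$-subspace. Since $z\in I$, $u+cz\in I$ forces $u\in I$. Hence $U\subseteq I$ and $I=U+\mathbb F_2 z=I_U$.

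\emph{Step 3: injectivity.} We have $I_U\cap V_n=U$, using the directness of the sum $V_n\oplus\mathbb F_2 z$, which follows from the uniqueness of normal forms established earlier. So $U$ is recovered from $I_U$.

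The only point needing care is Step 2. It depends on knowing $z\in I$ so that the $z$-component can be stripped off, and the Proposition supplies exactly this. Nothing else is a genuine obstacle.
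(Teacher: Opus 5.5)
Your proof is correct and follows the same route as the paper: Proposition~2.1 gives $z\in I$, you recover $U$ as $I\cap V_n$ (your projection equals this once $z\in I$, as you note), and injectivity comes from $I_U\cap V_n=U$. You also write out details the paper only asserts, namely that $(z)+U$ is closed under multiplication by $\mathfrak m$ (using $uv=B_n(u,v)z$, $zu=0$, $z^2=0$) and that the sum $\mathfrak m=V_n\oplus\F_2 z$ is direct.
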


\begin{proof}
Let $I$ be a nonzero proper ideal.  By Proposition 2.1, $z\in I$.
Set
\[
U=I\cap V_n.
\]
Then $U$ is a subspace of $V_n$, and
\[
I=(z)+U.
\]

Conversely, if $U\leq V_n$, then $(z)+U$ is a nonzero proper ideal.
Distinct subspaces give distinct ideals.
\end{proof}

It follows that
\begin{equation}
|V(\AG(R_n))|
=
\sum_{k=0}^n \gauss{n}{k},
\label{eq:vertices}
\end{equation}
where $\gauss{n}{k}$ is the Gaussian binomial coefficient.

We now turn to adjacency.

\begin{definition}
Let $\Ograph_n$ be the graph whose vertices are all subspaces of
$V_n$, with
\[
U\sim W
\quad\Longleftrightarrow\quad
U\perp W.
\]
Let
\[
\Ostar_n=\Ograph_n-\{0\}.
\]
\end{definition}

This graph is the natural orthogonality graph associated with the
bilinear form $B_n$.

\begin{theorem}
For all subspaces $U,W\leq V_n$,
\[
I_UI_W=0
\quad\Longleftrightarrow\quad
U\perp W.
\]
Consequently,
\[
\AG(R_n)\cong\Ograph_n.
\]
\end{theorem}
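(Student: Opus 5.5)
The plan is to compute the product ideal $I_UI_W$ explicitly using the multiplication rule \eqref{eq:multiplication}, and then combine this with the bijection of Theorem~2.2 to identify the vertex sets and the adjacency relations.

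\emph{Step 1: the product.} By Theorem~2.2, every element of $I_U$ has the form $u+cz$ with $u\in U$, $c\in\F_2$, and similarly every element of $W$-side ideal $I_W$ has the form $w+dz$. The relations $zx_i=0$ and $z^2=0$ give $z\cdot\mathfrak m=0$. Together with \eqref{eq:multiplication} this yields
\[
(u+cz)(w+dz)=uw=B_n(u,w)\,z .
\]
The product ideal $I_UI_W$ is the set of finite sums of such products; it is automatically an ideal, since $\mathfrak m z=0$. Hence
\[
I_UI_W=\Span\{B_n(u,w)z : u\in U,\ w\in W\}.
\]
This is either $0$ or $(z)$, and it is $0$ exactly when $B_n(u,w)=0$ for all $u\in U$ and $w\in W$, i.e.\ when $U\perp W$. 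This proves the displayed equivalence in both directions at once.

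\emph{Step 2: vertices.} The ring $R_n$ itself has annihilator $0$, so it is not a vertex of $\AG(R_n)$. Every nonzero proper ideal lies in $\mathfrak m$, and $z\mathfrak m=0$, so each such ideal has $z$ in its annihilator. Thus every nonzero proper ideal is a vertex, and the vertex set of $\AG(R_n)$ is exactly the set of nonzero proper ideals. By Theorem~2.2, the map $U\mapsto I_U$ is then a bijection from the vertex set of $\Ograph_n$ (all subspaces, including $0\mapsto(z)$) onto $V(\AG(R_n))$.

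\emph{Step 3: adjacency.} For distinct $U\neq W$ we have $I_U\neq I_W$. By Step~1, $I_U\sim I_W$ in $\AG(R_n)$ if and only if $U\perp W$, which is adjacency in $\Ograph_n$. Since both graphs are simple graphs on distinct vertices, self-orthogonal subspaces cause no issue. Hence $\AG(R_n)\cong\Ograph_n$.

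The only step needing care is Step~1. One must justify that the ideal product is generated by the pairwise products of elements, and that $z$-terms never contribute, because $z$ kills $\mathfrak m$. Once this is in place, the rest is bookkeeping with Theorem~2.2.
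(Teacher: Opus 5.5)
Your proposal is correct and follows essentially the same route as the paper: use $z\mathfrak m=0$ to discard all $z$-terms, reduce $I_UI_W$ to the span of the products $uw=B_n(u,w)z$, and read off that this vanishes exactly when $U\perp W$. Your Step~2 checks that every nonzero proper ideal has $z$ in its annihilator, so the vertices of $\AG(R_n)$ are exactly the ideals $I_U$; the paper leaves this point implicit, and including it properly completes the isomorphism claim.
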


\begin{proof}
Since
\[
z^2=zU=zW=0,
\]
the only possibly nonzero products in $I_UI_W$ are products of
elements of $U$ and $W$.

For $u\in U$ and $w\in W$, equation \eqref{eq:multiplication} gives
\[
uw=B_n(u,w)z.
\]
Thus $I_UI_W=0$ precisely when
\[
B_n(u,w)=0
\]
for every $u\in U$ and $w\in W$.  This is exactly the condition
$U\perp W$.
\end{proof}

The zero subspace corresponds to the ideal $(z)$.  It is a universal
vertex of $\Ograph_n$, and therefore $(z)$ is a universal vertex of
$\AG(R_n)$.  Consequently,
\begin{equation}
\chi(\AG(R_n))
=
\chi(\Ostar_n)+1,
\label{eq:chi}
\end{equation}
and
\begin{equation}
\omega(\AG(R_n))
=
\omega(\Ostar_n)+1.
\label{eq:omega}
\end{equation}

We emphasize one point concerning characteristic $2$.  The standard
bilinear form is nondegenerate and symmetric, but it is not
alternating.  Thus there are vectors $v$ for which
\[
B_n(v,v)=1.
\]
A subspace $T$ is called totally isotropic if
\[
T\subseteq T^\perp.
\]
If $\dim T=r$, then
\[
2r\leq n.
\]
All nonzero subspaces of a totally isotropic subspace form a clique
in $\Ostar_n$.  Therefore
\begin{equation}
\omega(\Ostar_n)
\geq
\max\left\{
n,\,
\sum_{k=1}^{\lfloor n/2\rfloor}
\gauss{\lfloor n/2\rfloor}{k}
\right\}.
\label{eq:lower}
\end{equation}

\section{The case \texorpdfstring{$n=4$}{n=4}}

Put
\[
R=R_4.
\]
Since
\[
|R_4|=2^{6},
\]
we have
\[
|R_4|=64.
\]
Moreover,
\[
|V(\AG(R_4))|
=
1+15+35+15+1
=
67.
\]

We first determine the clique number.

The proof of the clique bound is divided into two cases. First, we
bound the size of a family of pairwise orthogonal nonzero vectors,
which corresponds to a clique consisting entirely of one-dimensional
subspaces. We then treat cliques containing a subspace of dimension
at least two. The following two lemmas provide the required bounds.

\begin{lemma}
	\label{lem:orthogonal-vectors-F4}
	Let $S$ be a set of pairwise orthogonal nonzero vectors in
	$\mathbb{F}_2^4$. Then $|S|\leq4$.
\end{lemma}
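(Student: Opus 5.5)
The plan is to split $S$ according to the value of $B_4(v,v)$ and bound the two parts separately. Note that $B_4(v,v)=1$ exactly when $v$ has odd weight. Write $S=A\cup E$, where $A$ consists of the vectors of $S$ with $B_4(v,v)=1$ and $E$ of those with $B_4(v,v)=0$. Put $k=|A|$.

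\emph{Step 1: $A$ is linearly independent and spans a nondegenerate subspace.} Suppose some nonempty subset of $A$ sums to zero. Pairing that sum with one of its members $a$ kills every other term, by pairwise orthogonality, and leaves $B_4(a,a)=1$. This is a contradiction, so $A$ is linearly independent. The Gram matrix of $A$ is the $k\times k$ identity, so $W=\Span(A)$ is nondegenerate of dimension $k$. Hence $V_4=W\oplus W^\perp$, and $W^\perp$ is nondegenerate of dimension $4-k$.

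\emph{Step 2: $E$ spans a totally isotropic subspace of $W^\perp$.} Each $e\in E$ is orthogonal to every vector of $A$, so $E\subseteq W^\perp$. The vectors of $E$ are pairwise orthogonal and each is self-orthogonal. By bilinearity, $T=\Span(E)$ therefore satisfies $T\subseteq T^\perp$. Inside the nondegenerate space $W^\perp$ of dimension $m=4-k$, the orthogonal complement of $T$ has dimension $m-\dim T$. Then $T\subseteq T^{\perp}\cap W^\perp$ gives $2\dim T\le m$, which is the same argument used for the bound $2r\le n$ in Section~2. Since $E$ consists of distinct nonzero vectors of $T$, we get $|E|\le 2^{\lfloor (4-k)/2\rfloor}-1$.

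\emph{Step 3: combine.} This gives
\[
|S|\le k+2^{\lfloor(4-k)/2\rfloor}-1 .
\]
For $k=0,1,2,3,4$ the right-hand side equals $3,2,3,3,4$ respectively, so $|S|\le 4$ in every case. Equality holds for the standard basis.

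The only step needing care is Step 2. There one must use the nondegeneracy of $W^\perp$, which comes from the orthogonal decomposition in Step 1, to apply the dimension count for totally isotropic subspaces inside $W^\perp$ rather than inside $V_4$. Using $V_4$ would only give $|E|\le 3$, which is too weak when $k\ge 2$. I expect no other obstacle; the rest is direct computation.
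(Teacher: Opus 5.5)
Your proof is correct and is essentially the paper's argument. You split $S$ into anisotropic and isotropic vectors, use the identity Gram matrix to see that the anisotropic ones span a nondegenerate $k$-dimensional subspace, bound the isotropic ones by the nonzero vectors of a totally isotropic subspace of its $(4-k)$-dimensional orthogonal complement, and check $k+2^{\lfloor(4-k)/2\rfloor}-1\le 4$. The differences are cosmetic: you absorb the case $k=0$ into the general formula, and you state explicitly the nondegeneracy of $W^\perp$, which the paper uses tacitly when bounding the totally isotropic subspace inside $A^\perp$.
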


\begin{proof}
	Call a vector $v\in\mathbb{F}_2^4$ anisotropic if
	$B_4(v,v)=1$, and isotropic if $B_4(v,v)=0$.
	
	Let $k$ be the number of anisotropic vectors in $S$.
	
	First suppose that $k=0$. Since the vectors in $S$ are pairwise
	orthogonal and each vector is isotropic, their span is a totally
	isotropic subspace of $\mathbb{F}_2^4$. If $T$ is a totally isotropic
	subspace, then $T\subseteq T^\perp$. By nondegeneracy of $B_4$,
	$\dim T+\dim T^\perp=4$, and hence $2\dim T\leq4$. Thus
	$\dim T\leq2$, and consequently $|S|\leq2^2-1=3$.
	
	Now suppose that $k\geq1$. Let
	$A=\operatorname{span}(S_{\mathrm{an}})$,
	where $S_{\mathrm{an}}$ denotes the set of anisotropic vectors in
	$S$. Since the field is $\mathbb{F}_2$, each one-dimensional subspace
	has a unique nonzero vector. Moreover, the vectors in
	$S_{\mathrm{an}}$ are pairwise orthogonal and satisfy
	$B_4(v,v)=1$ for every $v\in S_{\mathrm{an}}$.
		
The matrix
$G=\bigl(B_4(v,w)\bigr)_{v,w\in S_{\mathrm{an}}}$
is the Gram matrix of the restriction of $B_4$ to $A$ with respect
to the ordered basis formed by the vectors in $S_{\mathrm{an}}$.
Since $G=I_k$ is nonsingular, the vectors in $S_{\mathrm{an}}$ are
linearly independent and the restriction of $B_4$ to $A$ is
nondegenerate. Consequently,
$\dim A=k$ and $\dim A^\perp=4-k$. 
	
	Every vector in $S\setminus S_{\mathrm{an}}$ is isotropic and is
	orthogonal to every vector in $S_{\mathrm{an}}$. Hence
	$S\setminus S_{\mathrm{an}}\subseteq A^\perp$. Since these vectors
	are pairwise orthogonal and isotropic, their span is a totally
	isotropic subspace of $A^\perp$. Therefore
	$|S\setminus S_{\mathrm{an}}|
	\leq2^{\lfloor(4-k)/2\rfloor}-1$.
	
	It follows that
	$|S|\leq k+2^{\lfloor(4-k)/2\rfloor}-1$.
	
	For $k=1,2,3,4$, the right-hand side is respectively
	$2,3,3,4$. Thus, in every case, $|S|\leq4$.
\end{proof}

\begin{lemma}	
\label{lem:clique-containing-plane}
	Let \(\mathcal C\) be a clique in \(\mathcal O_4^*\) containing a
	two-dimensional subspace \(U\).  Then \(|\mathcal C|\le 4\), with
	equality possible only if \(U\) is totally isotropic.
\end{lemma}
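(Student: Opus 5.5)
The plan is to exploit the fact that everything in the clique other than \(U\) is forced into the two-dimensional space \(U^\perp\). We then analyse pairwise orthogonal families of subspaces of a plane directly.

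\textbf{Step 1 (reduction to \(U^\perp\)).} Every \(W\in\mathcal C\setminus\{U\}\) satisfies \(W\perp U\), so \(W\subseteq U^\perp\). By nondegeneracy of \(B_4\) (as in the proof of Lemma~\ref{lem:orthogonal-vectors-F4}), \(\dim U^\perp=4-2=2\). Put \(P=U^\perp\). Then \(\mathcal C\setminus\{U\}\) is a family of pairwise orthogonal, pairwise distinct nonzero subspaces of \(P\), none equal to \(U\). The nonzero subspaces of \(P\) are its three lines \(\langle a\rangle,\langle b\rangle,\langle a+b\rangle\) and \(P\) itself. Hence it suffices to show two things. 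First, such a family has at most \(3\) members. Second, it can have \(3\) members only if \(U\) is totally isotropic.

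\textbf{Step 2 (families of size three force \(P\) totally isotropic).} A family of size \(3\) is of one of two types.

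If it consists of the three lines, then \(B(a,b)=0\). Also \(0=B(a,a+b)=B(a,a)\) and \(0=B(b,a+b)=B(b,b)\). So the form vanishes identically on \(P\), i.e.\ \(P\subseteq P^\perp\).

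If it consists of \(P\) together with two lines \(L_1,L_2\), then \(L_1\perp P\) and \(L_2\perp P\). Since \(L_1+L_2=P\), this gives \(P\perp P\), so again \(P\subseteq P^\perp\).

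A family of size \(4\) would contain one of these configurations, so in every case the family has at most \(3\) members. Therefore \(|\mathcal C|\le 4\).

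\textbf{Step 3 (equality case).} If \(|\mathcal C|=4\), Step 2 gives \(P\subseteq P^\perp\). Now \(P^\perp=(U^\perp)^\perp=U\), again by nondegeneracy, so \(U^\perp\subseteq U\). Both spaces have dimension \(2\), so \(U^\perp=U\), i.e.\ \(U\) is totally isotropic.

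The only delicate point is remembering that \(P\) itself may belong to the clique. It is handled by the radical argument in the second case of Step 2. Note that when \(U\) is totally isotropic we have \(P=U\), so \(P\) is excluded and the three lines of \(U\) give exactly the extremal configuration of size \(4\).
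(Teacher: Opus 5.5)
Your proof is correct and takes essentially the same route as the paper. Both reduce to the four nonzero subspaces of the plane $U^\perp$ and show that three pairwise orthogonal lines, or $U^\perp$ together with two lines, force $U^\perp$ to be totally isotropic; your spanning argument is the contrapositive of the paper's ``lines orthogonal to $U^\perp$ lie in $U\cap U^\perp$, which has dimension at most $1$ unless $U=U^\perp$'', and the only difference is that you argue this way rather than splitting first on whether $U$ is totally isotropic. One point of order: the conclusion ``at most 3 members'' at the end of Step 2 uses two facts you only state later, namely that $P\subseteq P^\perp$ forces $P=U$ (Step 3) and that $U$ is excluded from $\mathcal C\setminus\{U\}$, so move that observation into Step 2.
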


\begin{proof}
	Since \(B_4\) is nondegenerate, \(\dim U^\perp = 4 - 2 = 2\).  Every
	member of \(\mathcal C\setminus\{U\}\) is a nonzero subspace
	\(W \leq U^\perp\) with \(W \neq U\).
	
	For any vector \(v\), write \(Q(v)=B_4(v,v)\).  Since \(B_4\) is
	symmetric and \(\operatorname{char}\mathbb F_2 = 2\),
	\[
	Q(v+w)=B_4(v,v)+2B_4(v,w)+B_4(w,w)=Q(v)+Q(w) \pmod 2,
	\]
	so \(Q\) is additive on every subspace.
	
	\medskip
	\noindent\textbf{Case 1: \(U\) is totally isotropic}, i.e.
	\(U \subseteq U^\perp\).  Since \(\dim U = \dim U^\perp = 2\), this
	forces \(U = U^\perp\).  Then \(B_4\) vanishes identically on \(U\), so
	all three lines of \(U\) (its only nonzero proper subspaces) are
	pairwise orthogonal and orthogonal to \(U\).  Hence
	\(|\mathcal C\setminus\{U\}| \leq 3\) and \(|\mathcal C| \leq 4\),
	with equality attained by taking all three lines.
	
	\medskip
	\noindent\textbf{Case 2: \(U\) is not totally isotropic.}  We show
	\(|\mathcal C\setminus\{U\}| \leq 2\).
	
	\smallskip
	\noindent\emph{The three lines of \(U^\perp\) cannot be pairwise
		orthogonal.}  Write the three nonzero vectors of \(U^\perp\) as
	\(v_1, v_2, v_3 = v_1 + v_2\).  If
	\(\langle v_1 \rangle, \langle v_2 \rangle, \langle v_3 \rangle\)
	were pairwise orthogonal, then \(B_4(v_1,v_2)=0\) and
	\[
	0 = B_4(v_1,v_3) = B_4(v_1, v_1+v_2)
	= Q(v_1) + B_4(v_1,v_2) = Q(v_1),
	\]
	and symmetrically \(Q(v_2) = 0\); by additivity,
	\(Q(v_3) = Q(v_1) + Q(v_2) = 0\).  Thus \(B_4\) vanishes identically
	on \(U^\perp\), i.e.\ \(U^\perp\) is totally isotropic, and since
	\((U^\perp)^\perp = U\), so is \(U\) --- contradicting the case
	hypothesis.  Hence at most two of the three lines of \(U^\perp\) are
	pairwise orthogonal, and a clique using only lines of \(U^\perp\) has
	size at most \(2\).
	
	\smallskip
	\noindent\emph{At most one subspace can accompany \(U^\perp\) itself.}
	Suppose \(\mathcal C\setminus\{U\}\) contains the vertex \(U^\perp\)
	together with some nonzero proper subspace \(X \leq U^\perp\).
	Orthogonality of \(U^\perp\) and \(X\) requires
	\(X \subseteq (U^\perp)^\perp = U\), so
	\(X \subseteq U \cap U^\perp\).  If \(\dim(U \cap U^\perp) = 2\), then
	\(U \cap U^\perp\) equals both \(U\) and \(U^\perp\) (both
	\(2\)-dimensional), forcing \(U = U^\perp\), contrary to hypothesis;
	so \(\dim(U \cap U^\perp) \leq 1\).  A space of dimension \(\leq 1\)
	has at most one nonzero subspace, so at most one such \(X\) exists.
	Hence a clique containing \(U^\perp\) has
	\(|\mathcal C\setminus\{U\}| \leq 2\).
	
	\smallskip
	Combining the two possibilities, \(|\mathcal C\setminus\{U\}| \leq 2\)
	whenever \(U\) is not totally isotropic, so
	\(|\mathcal C| \leq 3 < 4\).
	
	\medskip
	In either case, \(|\mathcal C| \leq 4\), with equality possible only
	in Case~1, i.e.\ only when \(U\) is totally isotropic.
\end{proof}

\begin{theorem}
	\label{thm:clique-number-AG-R4}
	$\omega(\AG(R_4))=5$.
\end{theorem}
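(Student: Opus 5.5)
By \eqref{eq:omega}, it suffices to show $\omega(\Ostar_4)=4$. The approach is a matching lower bound from an explicit clique and an upper bound by cases on the largest dimension of a member of a clique.

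\textbf{Lower bound.} Take the four lines spanned by $e_1,e_2,e_3,e_4$. These vectors are pairwise orthogonal, so the lines form a clique of size $4$ in $\Ostar_4$. Alternatively, we can use the totally isotropic plane $T=\Span\{e_1+e_2,\,e_3+e_4\}$ together with its three lines. Either way $\omega(\Ostar_4)\ge 4$, which is consistent with \eqref{eq:lower}.

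\textbf{Upper bound.} Let $\mathcal C$ be a clique in $\Ostar_4$. We split according to the largest dimension $d$ of a member of $\mathcal C$.
\begin{itemize}
\item If $d=1$, every member of $\mathcal C$ is a line, and a line over $\F_2$ contains exactly one nonzero vector. Thus $\mathcal C$ corresponds to a set of pairwise orthogonal nonzero vectors, and Lemma~\ref{lem:orthogonal-vectors-F4} gives $|\mathcal C|\le 4$.
\item If $d=2$, Lemma~\ref{lem:clique-containing-plane} gives $|\mathcal C|\le 4$ directly.
\item If $d\ge 3$, let $U\in\mathcal C$ with $\dim U\ge 3$. Every other member $W$ of $\mathcal C$ lies in $U^\perp$, since $W\perp U$ forces $W\subseteq U^\perp$. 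By nondegeneracy, $\dim U^\perp=4-\dim U\le 1$, so $U^\perp$ has at most one nonzero subspace. Moreover, if $\dim U=4$ then $U^\perp=0$. Hence $|\mathcal C|\le 2$.
\end{itemize}
In every case $|\mathcal C|\le 4$, so $\omega(\Ostar_4)=4$. Then \eqref{eq:omega} gives $\omega(\AG(R_4))=5$.

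\textbf{Where the difficulty lies.} The main obstacle was already absorbed by the two lemmas, namely bounding cliques made only of lines and cliques containing a plane. The remaining step that needs care is the $d\ge 3$ case, where I rely on $W\perp U\Rightarrow W\subseteq U^\perp$ together with $\dim U^\perp=4-\dim U$.

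One small point to check is that a vertex cannot be orthogonal to itself in the clique count. This is automatic, because a clique consists of distinct vertices and adjacency is only required between distinct members.
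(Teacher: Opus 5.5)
Your proof is correct and follows the paper's argument essentially step for step. It reduces to $\omega(\Ostar_4)=4$ via the universal vertex $(z)$, uses the coordinate lines for the lower bound, and splits the upper bound into the all-lines case (Lemma~\ref{lem:orthogonal-vectors-F4}), the plane case (Lemma~\ref{lem:clique-containing-plane}), and the case $\dim U\ge 3$, where $\dim U^\perp\le 1$ forces $|\mathcal C|\le 2$; your alternative lower bound from the totally isotropic plane $\Span\{e_1+e_2,e_3+e_4\}$ with its three lines is also valid.
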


\begin{proof}
	Since $(z)$ is a universal vertex of $\AG(R_4)$, it suffices to prove
	that $\omega(\Ostar_4)=4$.
	
	The four coordinate lines
	$\langle e_1\rangle$, $\langle e_2\rangle$,
	$\langle e_3\rangle$, and $\langle e_4\rangle$
	are pairwise orthogonal. Therefore
	$\omega(\Ostar_4)\geq4$.
	
	We now prove the reverse inequality. Let $\mathcal{C}$ be a clique in
	$\Ostar_4$.
	
	First suppose that every member of $\mathcal{C}$ is one-dimensional.
	Since the field is $\mathbb{F}_2$, each one-dimensional subspace has
	a unique nonzero vector. Thus $\mathcal{C}$ can be identified with a
	set of pairwise orthogonal nonzero vectors in $\mathbb{F}_2^4$.
	By Lemma~\ref{lem:orthogonal-vectors-F4},
	$|\mathcal{C}|\leq4$.
	
	It remains to consider the case in which $\mathcal{C}$ contains a
	subspace of dimension at least $2$.
	
	Suppose first that $\mathcal{C}$ contains a two-dimensional subspace
	$U$. By Lemma~\ref{lem:clique-containing-plane},
	$|\mathcal{C}|\leq4$.
	
	Finally, suppose that $\mathcal{C}$ contains a subspace $U$ with
	$\dim U=r\geq3$. Every other member of $\mathcal{C}$ is a nonzero
	subspace of $U^\perp$. Since $B_4$ is nondegenerate,
	$\dim U^\perp=4-r$. Consequently,
	$|\mathcal{C}|
	\leq1+\sum_{j=1}^{4-r}\gauss{4-r}{j}$.
	
	If $r=3$, then
	$|\mathcal{C}|\leq1+\gauss{1}{1}=2$.
	If $r=4$, then $|\mathcal{C}|=1$.
	
	Thus every clique in $\Ostar_4$ has at most four vertices. Hence
	$\omega(\Ostar_4)=4$.
	
	Since $(z)$ is a universal vertex of $\AG(R_4)$, it contributes one
	additional vertex to a maximum clique. Consequently,
	$\omega(\AG(R_4))
	=\omega(\Ostar_4)+1
	=5$.
\end{proof}

We next establish the chromatic number.

Consider the following $13$ nonzero vectors:
\[
\begin{aligned}
X=\{&
0010,0011,0100,0101,0110,0111,1000,\\
&1001,1010,1011,1100,1101,1111
\}.
\end{aligned}
\]
Let $H$ be the induced subgraph of $\Ostar_4$ on the corresponding
one-dimensional subspaces.

Identifying the binary vectors with their decimal values, we have
\[
V(H)=\{2,3,4,5,6,7,8,9,10,11,12,13,15\}.
\]
The edge set is
\[
\begin{aligned}
E(H)=\{&
(2,4),(2,5),(2,8),(2,9),(2,12),(2,13),\\
&(3,4),(3,7),(3,8),(3,11),(3,12),(3,15),\\
&(4,8),(4,9),(4,10),(4,11),\\
&(5,7),(5,8),(5,10),(5,13),(5,15),\\
&(6,7),(6,8),(6,9),(6,15),\\
&(7,8),(7,11),(7,13),\\
&(9,11),(9,13),(9,15),\\
&(10,11),(10,15),\\
&(11,13),\\
&(12,13),(12,15)\}.
\end{aligned}
\]

\begin{proposition}
The graph $H$ satisfies
\[
\omega(H)=3
\qquad\text{and}\qquad
\chi(H)=5.
\]
\end{proposition}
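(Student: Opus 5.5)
To prove $\omega(H)=3$, I will first exhibit a triangle. The vertices $2,4,8$, that is $0010,0100,1000$, are pairwise adjacent in $E(H)$, so $\omega(H)\ge 3$. For the upper bound, a $K_4$ in $H$ is a set of four pairwise orthogonal nonzero vectors of $\F_2^4$ lying in $X$. Re-reading the proof of Lemma~\ref{lem:orthogonal-vectors-F4}, the bound $|S|=4$ is reached only when $k=4$. In that case $S$ consists of four anisotropic, pairwise orthogonal vectors, i.e.\ the rows of a $4\times4$ orthogonal matrix over $\F_2$.

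I will classify these matrices. Each row has odd weight, 1 or 3, and distinct rows have even overlap. Suppose a weight-$1$ row $e_i$ occurs. Then every other row avoids coordinate $i$. A weight-$3$ row must then be the complement of $e_i$. That row has odd overlap with any other odd-weight vector supported off $i$, which is impossible. Hence the rows are either all of weight $1$, giving $\{e_1,\dots,e_4\}$, or all of weight $3$, giving $\{0111,1011,1101,1110\}$. The first set contains $0001$ (decimal $1$) and the second contains $1110$ (decimal $14$). These are exactly the two nonzero vectors missing from $X$, so $H$ has no $K_4$ and $\omega(H)=3$.

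For $\chi(H)\le5$ I will simply write down an explicit proper $5$-colouring of the $13$ vertices and check it against the $36$ listed edges. A natural starting point is to colour greedily in the order $2,3,\dots,15$.

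The main obstacle is $\chi(H)\ge5$, i.e.\ showing that no proper $4$-colouring exists. My plan is a propagation argument. Fix colours $1,2,3$ on the triangle $\{2,4,8\}$. The common neighbours of two triangle vertices are then forced to take the remaining colour or the colour of the third vertex, and this should force further vertices step by step. The key constraint is that in a $4$-colouring every triangle uses three colours, and any two triangles sharing an edge either repeat or swap the colour of their apex. I would list all triangles of $H$ and use these ``diamond'' constraints, which force equal colours on the apices of adjacent triangles. Propagating them around $H$ should produce two adjacent vertices that are forced to share a colour. If the hand case analysis branches too much, I will fall back on an exhaustive computer check of all $4$-colourings. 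This is cheap with the triangle's colours fixed, about $4^{10}$ cases before pruning, and the paper's setup already anticipates Python verification. Alternatively, I could look for a $4$-vertex-critical substructure, such as a wheel with an odd rim, inside $H$.
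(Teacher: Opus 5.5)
\textbf{Assessment.} Your argument for $\omega(H)=3$ is correct and cleaner than the paper's appeal to ``exact enumeration''. The case bounds in the lemma show that $|S|=4$ forces $k=4$, your classification of orthonormal bases of $\mathbb{F}_2^4$ is right, and each of the two bases uses a vector ($0001$ or $1110$) outside $X$. Greedy colouring in the order $2,3,\dots,15$ does give five colour classes, $\{2,3,6,10\},\{4,5,12\},\{7,9\},\{8,11,15\},\{13\}$, which is exactly the paper's partition.

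\textbf{The gap.} The real content of the proposition is $\chi(H)\ge 5$, and for this you give no argument. The mechanism you propose does not work. Forcing equal colours on two apices over a common edge is a $3$-colouring phenomenon. With four colours, if $uv$ receives $a,b$, each apex may independently take either of the two remaining colours. Two apices could only be forced apart if they were adjacent, and that would create a $K_4$, which $H$ does not contain. So ``repeat or swap'' is no constraint at all, and nothing propagates without branching. Likewise, an odd wheel is $4$-chromatic, so finding one only certifies $\chi(H)\ge 4$. The computer fallback would be a valid verification, but it is not an argument you have supplied.

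\textbf{What is missing and how to supply it.} You need a certificate of non-$4$-colourability. The paper uses a fractional-colouring certificate. It assigns the weights $\tfrac19(4,3,4,3,1,4,4,3,1,4,1,4,1)$, whose total is $37/9>4$. Each of the $23$ maximal independent sets has weight at most $1$, so four independent sets cannot cover $V(H)$. All of this can be checked by hand.

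Alternatively, a short correct case analysis exists, built on the odd wheel you mention:
\begin{itemize}
\item $N(8)$ contains the $5$-cycle $7,3,4,2,5$. With $8$ coloured $1$, this rim is $3$-coloured with exactly one colour used once, and the other four rim vertices alternate in the remaining two colours.
\item The adjacent vertices $11$ and $13$ are both non-adjacent to $8$. Their rim neighbours are $\{3,4,7\}$ and $\{2,5,7\}$ respectively.
\item Split into five cases according to which rim vertex carries the singleton colour.
\item If it is $7$, both $11$ and $13$ are forced to colour $1$, a contradiction.
\item If it is $3$ or $5$, then $11$ and $13$ are forced, and $9$ has no colour left.
\item If it is $4$ or $2$, forcing continues through $9,10,12,15$ and leaves $6$ with no colour.
\end{itemize}
Either route fills the gap; as written, your proposal does not.
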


\begin{proof}
An exact enumeration of the cliques of $H$ shows that $H$ contains
triangles but no $K_4$.  Hence
\[
\omega(H)=3.
\]

For the chromatic number, assign weights to the vertices in the order
\[
(0010,0011,0100,0101,0110,0111,1000,
1001,1010,1011,1100,1101,1111)
\]
by
\[
\frac{1}{9}(4,3,4,3,1,4,4,3,1,4,1,4,1).
\]
The total weight is
\[
\frac{37}{9}>4.
\]

There are $23$ maximal independent sets in $H$, and each has weight
at most $1$.  Thus four independent sets have total weight at most
$4$, so they cannot cover all vertices of $H$.  Hence
\[
\chi(H)\geq5.
\]

On the other hand, the following five independent sets partition
$V(H)$:
\[
\begin{aligned}
C_1&=\{0010,0011,0110,1010\},\\
C_2&=\{0100,0101,1100\},\\
C_3&=\{1000,1011,1111\},\\
C_4&=\{0111,1001\},\\
C_5&=\{1101\}.
\end{aligned}
\]
Therefore
\[
\chi(H)=5.
\]
\end{proof}

An explicit five-coloring of \(\mathcal{O}_4^*\) is given in Table~\ref{tab:coloring}. A direct inspection shows that no two adjacent vertices share a color. Hence
\[
\chi(\mathcal{O}_4^*) \le 5.
\]
Together with the lower bound \(\chi(\mathcal{O}_4^*) \ge 5\) from Proposition~3.4, we conclude
\[
\chi(\mathcal{O}_4^*) = 5.
\]

\begin{table}[htbp]
	\centering
	\begin{tabular}{c|p{0.82\textwidth}}
		\hline
		Color & Subspaces (given by a basis) \\
		\hline
		1 & $\langle 0001 \rangle$, $\langle 0011 \rangle$, $\langle 0101 \rangle$, $\langle 1001 \rangle$,
		$\langle 0011,0001 \rangle$, $\langle 0101,0001 \rangle$, $\langle 0111,0001 \rangle$, $\langle 1001,0001 \rangle$,
		$\langle 1011,0001 \rangle$, $\langle 1101,0001 \rangle$, $\langle 1111,0001 \rangle$, $\langle 0110,0011 \rangle$,
		$\langle 1010,0011 \rangle$, $\langle 1110,0011 \rangle$, $\langle 1100,0101 \rangle$, $\langle 1110,0101 \rangle$,
		$\langle 1110,0111 \rangle$, $\langle 0111,0010,0001 \rangle$, $\langle 1011,0010,0001 \rangle$, $\langle 1111,0010,0001 \rangle$,
		$\langle 1101,0100,0001 \rangle$, $\langle 1111,0100,0001 \rangle$, $\langle 1111,0110,0001 \rangle$, $\langle 1101,0110,0001 \rangle$,
		$\langle 1110,0101,0010 \rangle$, $\langle 1110,0100,0011 \rangle$, $\langle 1110,0101,0011 \rangle$, $\langle 1111,0101,0011 \rangle$,
		$\langle 1111,0100,0010,0001 \rangle$ \\
		\hline
		2 & $\langle 0010 \rangle$, $\langle 0111 \rangle$, $\langle 1010 \rangle$, $\langle 0110,0010 \rangle$,
		$\langle 0111,0010 \rangle$, $\langle 1010,0010 \rangle$, $\langle 1011,0010 \rangle$, $\langle 1110,0010 \rangle$,
		$\langle 1111,0010 \rangle$, $\langle 0111,0011 \rangle$, $\langle 1111,0011 \rangle$, $\langle 1111,0110 \rangle$,
		$\langle 1100,0110 \rangle$, $\langle 1111,0111 \rangle$, $\langle 1101,0111 \rangle$, $\langle 1100,0111 \rangle$,
		$\langle 1110,0100,0010 \rangle$, $\langle 1111,0100,0010 \rangle$, $\langle 1111,0101,0010 \rangle$, $\langle 1111,0100,0011 \rangle$ \\
		\hline
		3 & $\langle 0100 \rangle$, $\langle 1110 \rangle$, $\langle 1111 \rangle$, $\langle 1100,0100 \rangle$,
		$\langle 1101,0100 \rangle$, $\langle 1110,0100 \rangle$, $\langle 1111,0100 \rangle$, $\langle 1101,0101 \rangle$,
		$\langle 1110,0110 \rangle$ \\
		\hline
		4 & $\langle 1000 \rangle$, $\langle 1011 \rangle$, $\langle 1100 \rangle$, $\langle 1011,0011 \rangle$,
		$\langle 1111,0101 \rangle$, $\langle 1101,0110 \rangle$ \\
		\hline
		5 & $\langle 0110 \rangle$, $\langle 1101 \rangle$ \\
		\hline
	\end{tabular}
	\caption{An explicit five-coloring of $\mathcal{O}_4^*$.}
	\label{tab:coloring}
\end{table}

\begin{theorem}
For
\[
R_4=
\frac{\mathbb{F}_2[x_1,x_2,x_3,x_4,z]}
{(z^2,zx_i,x_ix_j\ (i\ne j),x_i^2-z)},
\]
we have
\[
|R_4|=64,
\qquad
|V(\AG(R_4))|=67,
\]
and
\[
\boxed{
\omega(\AG(R_4))=5<6=\chi(\AG(R_4)).
}
\]
In particular, $\AG(R_4)$ is not weakly perfect, and
 Conjecture~1.1 fails for non-reduced commutative rings.
\end{theorem}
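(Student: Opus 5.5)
The plan is to assemble the final theorem from the results already established, so the proof is mostly bookkeeping. The order ring, then vertex count, then clique number, then chromatic number is the natural one.

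\emph{Size and vertex count.} By the normal form $a+\sum b_ix_i+cz$ we have $|R_4|=2^{4+2}=64$. By the bijection of Theorem~2.2 between subspaces of $V_4$ and nonzero proper ideals, every nonzero proper ideal of the local ring $R_4$ is a vertex. Each such ideal lies in $\mathfrak m$ and $\mathfrak m z=0$, so its annihilator is nonzero. The unit ideal $R_4$ has zero annihilator and is therefore excluded. Hence \eqref{eq:vertices} gives
\[
|V(\AG(R_4))|=\sum_{k=0}^4\gauss{4}{k}=1+15+35+15+1=67.
\]

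\emph{Clique number.} This is exactly Theorem~\ref{thm:clique-number-AG-R4}, which gives $\omega(\AG(R_4))=5$.

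\emph{Chromatic number.} I would invoke \eqref{eq:chi}, which reduces the task to showing $\chi(\Ostar_4)=5$.

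For the lower bound, $H$ is an induced subgraph of $\Ostar_4$, so Proposition~3.4 gives
\[
\chi(\Ostar_4)\ge\chi(H)=5.
\]

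For the upper bound, I would use the five-colouring in Table~\ref{tab:coloring}. Two checks are needed:
\begin{itemize}
\item the table lists each of the $66$ nonzero subspaces exactly once (the class sizes $29+20+9+6+2=66$ confirm the count);
\item each colour class is independent, meaning no two members $U,W$ of the same class satisfy $U\perp W$.
\end{itemize}
For the second check, it suffices to test $B_4(u,w)=0$ on basis vectors of $U$ and $W$, because orthogonality is bilinear.

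Combining the two bounds gives $\chi(\Ostar_4)=5$, and hence $\chi(\AG(R_4))=6>5=\omega(\AG(R_4))$. The graph is therefore not weakly perfect. Since $x_1^2=z\ne0$ while $z^2=0$, the element $z$ is a nonzero nilpotent, so $R_4$ is non-reduced. This refutes Conjecture~1.1 for non-reduced rings.

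\emph{Main obstacle.} The only step not reducible to earlier statements is certifying the colouring table. It involves finitely many pairs within each class, roughly $\binom{29}{2}+\binom{20}{2}+\cdots$, and these are best checked mechanically.

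To shorten the hand check I would use a structural observation: every class contains no totally isotropic subspace together with a sub-line of it, and no subspace together with its orthogonal complement. I would then verify class by class, beginning with the large classes 1 and 2. Many members of class 1 contain $0001$, so for pairs $U,W$ both containing $0001$, orthogonality would force $B_4(0001,0001)=0$, which is false since this equals $1$. Those pairs are therefore automatically non-adjacent, and the same trick applies with $0010$ for class 2.

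The lower bound $\chi(H)\ge5$ also rests on Proposition~3.4's enumeration of the $23$ maximal independent sets. That is assumed here, but it is the other computational point a skeptical reader would want verified.
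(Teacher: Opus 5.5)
Your proposal is correct and follows the paper's own proof: $|R_4|$ and the $67$ vertices come from the normal form and the subspace--ideal bijection, $\omega(\AG(R_4))=5$ is Theorem~\ref{thm:clique-number-AG-R4}, and $\chi(\AG(R_4))=\chi(\Ostar_4)+1=6$ combines the lower bound $\chi(H)=5$ from Proposition~3.4 with the five-colouring in Table~\ref{tab:coloring}. Your additions are accurate and simply make explicit what the paper leaves to ``the preceding discussion'' and ``direct inspection'': nonzero annihilators via $\mathfrak m z=0$, non-reducedness via the nilpotent $z$, and the anisotropic-vector shortcut for checking that each colour class is independent.
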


\begin{proof}
The first two assertions follow from the preceding discussion.

We have already proved
\[
\omega(\AG(R_4))=5
\]
and
\[
\chi(\Ostar_4)=5.
\]
Since $(z)$ is a universal vertex, it requires one additional color.
Consequently,
\[
\chi(\AG(R_4))
=
\chi(\Ostar_4)+1
=
6.
\]
Therefore
\[
\omega(\AG(R_4))
=
5
<
6
=
\chi(\AG(R_4)).
\]
Thus $R_4$ is a non-reduced commutative ring whose
annihilating-ideal graph is not weakly perfect. Consequently,
 Conjecture~1.1 is false.
\end{proof}

\section{Further computations and open problems}

The preceding description allows the graphs $\Ostar_n$ to be generated
directly from finite-dimensional linear algebra. 
Let
\[
N_n=\sum_{k=1}^n \gauss{n}{k}.
\]
The first few computations give Table~\ref{tab:small-n}.

\begin{table}[htbp]
\centering
\begin{tabular}{c|r|r|r|r}
\hline
$n$ & $|R_n|$ & $N_n$ & $\omega(\Ostar_n)$ & $\chi(\Ostar_n)$\\
\hline
1 & 8 & 1 & 1 & 1\\
2 & 16 & 4 & 2 & 2\\
3 & 32 & 15 & 3 & 3\\
4 & 64 & 66 & 4 & 5\\
5 & 128 & 373 & 5 & 8\\
6 & 256 & 2824 & 15 & \text{not determined}\\
7 & 512 & 29211 & 16 & \text{not determined}\\
\hline
\end{tabular}
\caption{Small values for the graphs \texorpdfstring{$\Ostar_n$}{O*\_n}.}
\label{tab:small-n}
\end{table}

For the full annihilating-ideal graphs this gives Table~\ref{tab:full}.

\begin{table}[htbp]
\centering
\begin{tabular}{c|r|r|r|r}
\hline
$n$ & $|R_n|$ & $|V(\AG(R_n))|$ & $\omega(\AG(R_n))$ & $\chi(\AG(R_n))$\\
\hline
1 & 8 & 2 & 2 & 2\\
2 & 16 & 5 & 3 & 3\\
3 & 32 & 16 & 4 & 4\\
4 & 64 & 67 & 5 & 6\\
5 & 128 & 374 & 6 & 9\\
6 & 256 & 2825 & 16 & \text{not determined}\\
7 & 512 & 29212 & 17 & \text{not determined}\\
\hline
\end{tabular}
\caption{Small values for the full annihilating-ideal graphs.}
\label{tab:full}
\end{table}

For $n=5$, the graph induced by the $31$ one-dimensional subspaces
has chromatic number $8$, while the complete graph $\Ostar_5$ also
has an exact $8$-coloring.  Thus
\[
\chi(\Ostar_5)=8,
\qquad
\omega(\Ostar_5)=5,
\]
and hence
\[
\omega(\AG(R_5))=6<9=\chi(\AG(R_5)).
\]

For $n=6$, consider
\[
T=
\Span\{e_1+e_2,e_3+e_4,e_5+e_6\}.
\]
Then $T$ is totally isotropic and
\[
T=T^\perp.
\]
Its $15$ nonzero subspaces form a clique.  Exact computation gives
\[
\omega(\Ostar_6)=15.
\]

For $n=7$, the same $T$ satisfies
\[
\dim T^\perp=4.
\]
The $15$ nonzero subspaces of $T$, together with $T^\perp$, form a
clique of size $16$.  Exact computation gives
\[
\omega(\Ostar_7)=16.
\]

The following elementary observation is useful in studying such
cliques.

\begin{proposition}
Let $\mathcal C$ be a clique in $\Ostar_n$, and suppose
$U\in\mathcal C$ has dimension $r$.  Then
\[
\mathcal C\setminus\{U\}
\subseteq
\{\,0\ne W\leq U^\perp\,\}.
\]
Consequently,
\[
|\mathcal C|
\leq
1+\sum_{j=1}^{n-r}\gauss{n-r}{j}.
\]
\end{proposition}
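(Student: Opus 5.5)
The plan is to argue directly from the definition of adjacency in $\Ostar_n$, together with nondegeneracy of $B_n$, and then count subspaces.

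First I will establish the containment. Let $W\in\mathcal C$ with $W\ne U$. Since $\mathcal C$ is a clique in $\Ostar_n$, we have $U\perp W$, so $B_n(u,w)=0$ for all $u\in U$ and $w\in W$. This says exactly that every $w\in W$ lies in $U^\perp$, so $W\leq U^\perp$. Also $W\neq 0$, because the zero subspace has been deleted from $\Ostar_n$. This gives $\mathcal C\setminus\{U\}\subseteq\{\,0\ne W\leq U^\perp\,\}$.

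Second, I will compute $\dim U^\perp$. The map $V_n\to U^*$, $v\mapsto B_n(v,\cdot)|_U$, is surjective because $B_n$ is nondegenerate; this is the same fact used in Lemma~\ref{lem:orthogonal-vectors-F4} and Lemma~\ref{lem:clique-containing-plane}. Its kernel is $U^\perp$, so $\dim U^\perp=n-r$.

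Third, I will count. The number of $j$-dimensional subspaces of an $(n-r)$-dimensional $\F_2$-space is $\gauss{n-r}{j}$. Summing over $j=1,\dots,n-r$ bounds $|\mathcal C\setminus\{U\}|$, and adding $1$ for $U$ itself gives the stated bound. It is harmless that $U$ might itself lie in $U^\perp$, since the bound only needs to hold as an inequality.

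There is no real obstacle here. The only point needing care is the dimension formula, which relies on nondegeneracy of $B_n$ rather than on the form being alternating or anisotropic. Nondegeneracy holds because the Gram matrix of $B_n$ is the identity.
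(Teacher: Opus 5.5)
Your proposal is correct and matches the paper's proof step for step. Orthogonality gives $W\leq U^\perp$ for each nonzero $W\in\mathcal C\setminus\{U\}$, nondegeneracy of $B_n$ gives $\dim U^\perp=n-r$, and counting the nonzero subspaces of $U^\perp$, plus one for $U$, yields the bound. The only difference is that you justify the dimension formula through the surjection $V_n\to U^*$, which the paper simply asserts from nondegeneracy.
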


\begin{proof}
Every other member $W$ of $\mathcal C$ is orthogonal to $U$, and
hence
\[
W\subseteq U^\perp.
\]
Since $B_n$ is nondegenerate,
\[
\dim U^\perp=n-r.
\]
The number of nonzero subspaces of $U^\perp$ is therefore
\[
\sum_{j=1}^{n-r}\gauss{n-r}{j}.
\]
Adding the vertex $U$ gives the result.
\end{proof}

We finish with three questions.

\begin{problem}
Determine the exact value of
\[
\omega(\Ostar_n)
\]
for general $n$.
\end{problem}

\begin{problem}
Determine $\chi(\Ostar_n)$ for $n\geq6$ and investigate the relation
between
\[
\chi(\Ostar_n)
\quad\text{and}\quad
\omega(\Ostar_n).
\]
\end{problem}

\begin{problem}
Determine whether there exist infinitely many integers $n\geq1$ for
which
\[
\chi(\AG(R_n))>\omega(\AG(R_n)).
\]
Equivalently, determine whether
\[
\chi(\Ostar_n)>\omega(\Ostar_n)
\]
holds for infinitely many $n$.
\end{problem}

A related smaller graph may be useful.  Let $\Gamma_n$ be the graph
whose vertices are the nonzero vectors of $\mathbb{F}_2^n$, with two distinct
vectors $u,v$ adjacent when
\[
u\cdot v=0.
\]
This is precisely the subgraph induced by the one-dimensional
subspaces.  Thus
\[
\chi(\Gamma_n)\leq\chi(\Ostar_n).
\]
For $n=4$ and $n=5$, the line graph already detects the chromatic
obstruction.  Understanding the graphs $\Gamma_n$ may therefore
provide a useful first step toward the last problem.
\section*{Declaration of AI use}

During the preparation of this manuscript, the author used
\texttt{DeepSeek} (versions V3 and later, accessed via
\texttt{deepseek.com} and the DeepSeek mobile application) for the
following purposes:

\begin{itemize}
	\item language editing and improvement of the exposition;
	\item assistance in structuring and drafting portions of the text;
	\item verification of the finite-coloring data presented in
	Table~\ref{tab:coloring}.
\end{itemize}

All mathematical results, the choice of the counterexample, the
verification of the proofs, and the final content of the manuscript
are the sole responsibility of the author.  The author has reviewed
and edited all AI-generated content and takes full responsibility for
the accuracy, originality, and integrity of the published work.  No
third-party copyrighted material was reproduced by the AI tool.  The
author declares no competing interests arising from the use of this
tool.

\end{document}